\newtheorem{thm}{Theorem}[section]
\newtheorem*{thm-non}{Theorem}
\newtheorem{lem}[thm]{Lemma}
\newtheorem{prop}[thm]{Proposition}
\newtheorem{cor}[thm]{Corollary}
\theoremstyle{definition}
\newtheorem{defi}[thm]{Definition}
\newtheorem{rem}[thm]{Remark}
\newtheorem{exam}[thm]{Example}
\DeclareMathOperator\PP{\mathbb{P}}
\DeclareMathOperator\OO{\mathcal{O}}
\DeclareMathOperator\De{D^b}
\DeclareMathOperator\Hom{Hom}
\DeclareMathOperator\Ext{Ext}
\DeclareMathOperator\M{M}
\DeclareMathOperator\NS{NS}
\DeclareMathOperator\IZ{\mathcal{I}_{\mathcal{Z}}}
\DeclareMathOperator\cok{coker}
\DeclareMathOperator\rk{rk}
\newcommand{\ZZ}{\ensuremath{\mathbb{Z}}} 
\newcommand{\CC}{\ensuremath{\mathbb{C}}} 
\newcommand{\X}{\ensuremath{X^{[k]}}}
\newcommand{\Xk}{\ensuremath{X^{[k]}}}
\begin{document}
	
	\title{Stability and certain $\mathbb{P}^n$-functors}

	\author{Fabian Reede}
	\address{Institut f\"ur Algebraische Geometrie, Leibniz Universit\"at Hannover, Welfengarten 1, 30167 Hannover, Germany}
	\email{reede@math.uni-hannover.de}
	
	\keywords{stable sheaves, integral functors, Hilbert schemes}
	
	\subjclass[2010]{Primary: 14F08; Secondary: 14F08, 14D20, 14J60, 53C26}
	
	\begin{abstract}
Let $X$ be a K3 surface. We prove that Addington's $\mathbb{P}^n$-functor between the derived categories of $X$ and the Hilbert scheme of points $\Xk$ maps stable vector bundles on $X$ to stable vector bundles on $\Xk$, given some numerical conditions are satisfied.
	\end{abstract}
	
	\maketitle
	
	\section*{Introduction}
Moduli spaces of stable sheaves on K3 surfaces are examples of hyperk\"ahler manifolds, also known as compact irreducible holomorphic symplectic manifolds. These arise in the classification of compact K\"ahler  manifolds with trivial first Chern class and are therefore interesting objects to study. It is natural to wonder if moduli spaces of stable sheaves on hyperk\"ahler manifolds of higher dimension also have interesting properties.

Unfortunately moduli spaces of stable sheaves on higher dimensional varieties behave badly in general. Furthermore there are not many explicit examples of stable sheaves on higher dimensional hyperk\"ahler manifolds. There are the tautological bundles on Hilbert schemes of points on a K3 surface and on generalized Kummer varities, see \cite{rz3}, \cite{sch10}, \cite{stapleton} and \cite{wandel}. A second class is given by the ``wrong-way" fibers of a universal family of stable vector bundles on a moduli space of stable vector bundles, see \cite{rz}, \cite{rz2} and \cite{wray}.

In this article we want to give a new class of stable sheaves on Hilbert schemes of points. For this we use a result by Addington in \cite{adding}, saying that the integral functor
\begin{equation*}
\Phi: \De(X) \rightarrow \De(\Xk)
\end{equation*}
with kernel the universal ideal sheaf $\IZ$ on $X\times \Xk$ is a $\PP^{k-1}$-functor. 

Our main results can be summarized as follows:

\begin{thm-non}
Let $X$ be a K3 surface with $\NS(X)=\mathbb{Z}h$ and assume $E$ is a $\mu_h$-stable locally free sheaf with Mukai vector $v=(r,h,s)$  such that 
\begin{equation*}
\chi(E)\geqslant \frac{v(E)^2}{2}+(r+1)k+1.
\end{equation*}
Then the image of $E$ under the integral functor $\Phi: \De(X)\rightarrow \De(\Xk)$ is a $\mu_H$-stable locally free sheaf on $\Xk$ (for some ample class $H\in \NS(\Xk)$). 

Choosing $v=(r,h,s)$ such that $v^2+2<2r$ implies that all sheaves classified by the moduli space $\M_{X,h}(v)$ are locally free. If furthermore $\M_{X,h}(v)$ is a fine moduli space, then the integral functor $\Phi$ restricts to a morphism
\begin{equation*}
f: \M_{X,h}(v) \rightarrow \mathcal{M},\,\,\,[E]\mapsto [\Phi(E)] 
\end{equation*}
which identifies $\M_{X,h}(v)$ with a smooth connected component of a certain moduli space $\mathcal{M}$ of stable sheaves on $\Xk$.
\end{thm-non}	

The main theorem is not void, that is there are Mukai vectors on certain K3 surfaces that satisfy all conditions stated in the theorem. We will give two examples in \ref{exmp}.

This result is in the same vein as Yoshioka's results in \cite{yocomp}. There he starts with an arbitrary K3 surface together with an isotropic Mukai vector $v$ such that $\M_{X,h}(v)$ is also a fine moduli space with universal family $\mathcal{E}$. It is well known that $Y=\M_{X,h}(v)$ is again a K3 surface. He then proves that the Fourier-Mukai transform $\Phi_{\mathcal{E}}: \De(X)\rightarrow \De(Y)$, an equivalence in this case, preserves stability (even S-equivalence classes), given that some numerical conditions are satisfied.

All objects in this text are defined over the field of complex numbers $\mathbb{C}$. 

\subsection*{Acknowledgement}
I thank Ziyu Zhang for many useful discussions. I am also grateful to the anonymous
referees who helped to improve the presentation of the manuscript greatly.
	
	\section{\texorpdfstring{Background on $\mathbb{P}^{n}$-functors }%
		{Background on Pn-functors}}
We start by recalling some basic facts about the $\mathbb{P}^n$-functors we are interested in. For the general definition of $\mathbb{P}^n$-functors we refer to Addington's paper, see \cite[Section 4]{adding}. For similar results see also \cite[Theorem 1.1]{mark2} and \cite{log}.

\begin{defi}
Let $X$ be a K3 surface. Define the integral functor $\Phi$ by
		\begin{equation*}\label{eqn:defPhi}
			\mathrm{\Phi} \colon \De(X) \longrightarrow \De(\X),\,\,\,E\mapsto Rp_{*}(q^{*}E\otimes\IZ).
		\end{equation*}
		Thus $\Phi$ has the universal ideal sheaf $\IZ$ on $X \times X^{[k]}$ as kernel. Here $p: X\times \Xk \rightarrow \Xk$ and $q: X\times \Xk \rightarrow X$ are the projections. We note that $\Phi$ is a $\PP^{k-1}$-functor with associated autoequivalence $H=[-2]$, see \cite[Theorem 3.1, Example 4.2(2)]{adding}. 
\end{defi}

	\begin{rem}\label{pn-func2}
		The fact that the integral functor $\mathrm{\Phi}$ is a $\mathbb{P}^{k-1}$-functor with associated autoequivalence $H=[-2]$ has the following helpful consequence: for any two elements $E, F\in \De(X)$ there is an isomorphism of graded vector spaces
		\begin{equation*}\label{eqn:pnfunc}
			\Ext^{*}_{\X}(\mathrm{\Phi}(E),\mathrm{\Phi}(F))\cong \Ext^{*}_X(E,F)\otimes \mathrm{H}^{*}(\PP^{k-1},\CC),
		\end{equation*}
	see for example \cite[Section 2.1]{add16-2}.
	\end{rem}
	
	\begin{rem}\label{extiso}
	Remark \ref{pn-func2} shows that if $\Ext^i_X(E,F)=0$ for $i<0$ then the natural maps
		\begin{equation*}
			\Ext^{i}_X(E,F) \rightarrow \Ext^{i}_{\X}(\mathrm{\Phi}(E),\mathrm{\Phi}(F))
		\end{equation*}
		are isomorphisms for $i=0$ and $1$. This especially applies to the case that $E$ and $F$ are in fact sheaves (interpreted as complexes concentrated in degree zero in $\De(X)$).
	\end{rem}

	\section{Preservation of slope-stability for vector bundles}\label{sect1}
	
	Throughout this article we assume that $X$ is a K3 surface with $\NS(X) = \ZZ h$, where $h$ is a primitive ample class.

	\begin{lem}\label{cohvan}
		Let $E$ be a $\mu_h$-stable vector bundle on $X$ with Mukai vector $v=(r,h,s)$ such that
	\begin{equation}\label{ineq}
		\chi(E) \geqslant \frac{v(E)^2}{2}+(r+1)k+1,
	\end{equation}
then $\mathrm{H}^i(X,E\otimes I_Z)=0$ for $i=1,2$ and all $[Z]\in\Xk$.
	\end{lem}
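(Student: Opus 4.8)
The plan is to exploit the two short exact sequences coming from $0\to I_Z\to\OO_X\to\OO_Z\to 0$. Since $E$ is locally free, tensoring stays exact and yields $0\to E\otimes I_Z\to E\to E|_Z\to 0$ with $E|_Z$ a skyscraper of length $rk$. As $H^2(E|_Z)=0$, the long exact sequence gives $H^2(E\otimes I_Z)\cong H^2(E)\cong H^0(E^\vee)^\ast$ by Serre duality. Since $E^\vee$ is again $\mu_h$-stable, of negative slope $-h^2/r$, a nonzero section would produce a rank one subsheaf of $E^\vee$ of slope $\geqslant 0>\mu_h(E^\vee)$, contradicting stability; hence $H^2(E\otimes I_Z)=0$ for every $Z$. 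The same sequence shows that $H^1(E\otimes I_Z)=0$ is equivalent to the conjunction of $H^1(E)=0$ and surjectivity of the evaluation map $H^0(E)\to E|_Z$ for every length $k$ subscheme.

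Next I would reformulate the $H^1$-vanishing via Serre duality as $\Ext^1(I_Z,E^\vee)=0$. Here $\Hom(I_Z,E^\vee)=H^0(E^\vee)=0$ and $\Ext^2(I_Z,E^\vee)\cong H^0(E\otimes I_Z)^\ast$, while the Mukai pairing computes $\chi(I_Z,E^\vee)=\chi(E)-rk$. Thus the desired vanishing is exactly the upper bound $h^0(E\otimes I_Z)\leqslant\chi(E)-rk$; together with the always available lower bound $h^0(E\otimes I_Z)\geqslant\chi(E\otimes I_Z)=\chi(E)-rk$ (using $H^2=0$) this forces $H^1(E\otimes I_Z)=0$. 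It is worth recording that, writing a would-be nonsplit extension $0\to E^\vee\to G\to I_Z\to 0$, the hypothesis \eqref{ineq} is exactly equivalent to $v(G)^2\leqslant -4$; this is the numerical heart of the statement.

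The main obstacle is establishing the upper bound $h^0(E\otimes I_Z)\leqslant\chi(E)-rk$, equivalently the uniform evaluation-surjectivity, for \emph{every} $Z$, including non-reduced ones. Pure $\mu_h$-stability cannot suffice, since it is blind to codimension two: a full rank subsheaf with zero-dimensional quotient keeps the same slope and so never violates $\mu_h$-stability, which is precisely why the finer input \eqref{ineq} is indispensable. I would attack it by taking a counterexample $Z$ of minimal length and using minimality (a Cayley--Bacharach type condition) to control $G$, then bounding $h^0$ from $\mu_h$-stability of $E$ --- either through the subsheaf of $E$ generated by the sections vanishing on $Z$, whose Mukai vector is then constrained by \eqref{ineq} to the point of contradiction, or by restricting to a general curve $C\in|h|$ and combining Mehta--Ramanathan semistability of $E|_C$ with a Clifford-type estimate. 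Reconciling such a bound with the fact that $G$ is genuinely non-simple (so that no naive stability contradiction for $G$ itself is available, despite $v(G)^2\leqslant -4$) is where I expect the real difficulty to lie.
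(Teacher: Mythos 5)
Your treatment of $H^2$ and your Serre--duality reformulation of the $H^1$-vanishing are correct and agree with the paper's reductions; you have even isolated exactly the right numerical fact, namely that under \eqref{ineq} any non-split extension $0\to E^{*}\to G\to I_Z\to 0$ satisfies $v(G)^2\leqslant -4$, equivalently $\chi(G,G)\geqslant 4$. The genuine gap is that the proof stops there: the vanishing of $\mathrm{H}^1(X,E\otimes I_Z)$ is never established, and the substitute strategies you sketch (minimal-length counterexample with a Cayley--Bacharach condition, the subsheaf generated by sections vanishing on $Z$, restriction to a curve in $|h|$ with Mehta--Ramanathan and a Clifford-type bound) are not carried out.

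Worse, your strategic assessment dismisses precisely the step that closes the argument. You claim that $G$ is ``genuinely non-simple, so that no naive stability contradiction for $G$ itself is available, despite $v(G)^2\leqslant -4$.'' This is backwards. The missing ingredient is \cite[Lemma 2.1]{yosh}: since the extension is non-split, $E^{*}$ is a $\mu_h$-stable bundle with $c_1(E^{*})=-h$, $I_Z$ is an ideal sheaf, and $\NS(X)=\ZZ h$, the middle term $G$ \emph{is} $\mu_h$-stable --- non-splitness is exactly what rules out the potential destabilizers in this slope configuration, where the sub has smaller slope than the quotient. Stability makes $G$ simple, so $\hom_X(G,G)=\ext^2_X(G,G)=1$ by Serre duality, hence $v(G)^2=-\chi(G,G)=-2+\ext^1_X(G,G)\geqslant -2$, contradicting $v(G)^2\leqslant -4$. (Equivalently, as the paper phrases it: $\chi(G,G)\geqslant 4$ forces $\hom_X(G,G)\geqslant 2$, impossible for a stable sheaf.) In other words, the forced non-simplicity you computed is not an obstruction to the stability approach; it \emph{is} the contradiction that refutes the assumption $\mathrm{H}^1(X,E\otimes I_Z)\neq 0$. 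This is exactly the route the paper takes, following \cite[Proposition 2.2]{oprea}, so without Yoshioka's extension lemma (or a proof of it in this special case) your proposal leaves the central assertion of the lemma unproven.
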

\begin{proof}
Note that $I_Z$ is $\mu_h$-stable with $\mu_h(I_Z)=0$ so
\begin{equation*}
\mathrm{H}^2(E\otimes I_Z)\cong \Ext^2_X(E^{*},I_Z)\cong \Hom_X(I_Z,E^{*})^{\vee}=0
\end{equation*}
by \cite[Proposition 1.2.7]{huy} as $\mu_h(E^{*})<0$.

Next we prove that $\mathrm{H}^1(E\otimes I_Z)=0$ for all $[Z]\in \Xk$, following the proof of \cite[Proposition 2.2.]{oprea}. 

Assume $\mathrm{H}^1(X,E\otimes I_Z)\neq 0$. By Serre duality we find
\begin{equation*}
	\mathrm{H}^1(X,E\otimes I_Z)\cong \Ext^1_X(E^{*},I_Z)\cong \Ext^1(I_Z,E^{*})^{\vee}.
\end{equation*}
Thus there is a nontrivial extension
		\begin{equation*}
	\begin{tikzcd}
		0 \arrow[r] & E^{*} \arrow[r] &  G \arrow[r] & I_Z \arrow[r] & 0.
	\end{tikzcd}
\end{equation*}
Using \cite[Lemma 2.1]{yosh} shows that $G$ is also $\mu_h$-stable with Mukai vector 
\begin{equation*}
	v(G)=v(E^{*})+v(I_Z).
\end{equation*}
The Euler characteristic of a pair $(F_1,F_2)$ of coherent sheaves can be expressed as
\begin{equation*}
	\chi(F_1,F_2)=\sum\limits_{i=0}^2 (-1)^i \mathrm{ext}^i_X(F_1,F_2)=-\left\langle v(F_1),v(F_2) \right\rangle.
\end{equation*}
We can now compute
\begin{align*}
	\chi(G,G)&=-\left\langle v(G),v(G) \right\rangle=-\left\langle v(E^{*})+v(I_Z),v(E^{*})+v(I_Z) \right\rangle\\
	&=-\left\langle v(E^{*}),v(E^{*}) \right\rangle-2\left\langle v(E^{*}),v(I_Z) \right\rangle-\left\langle v(I_Z),v(I_Z) \right\rangle\\
	&=-\left\langle v(E),v(E) \right\rangle+2\chi(E^{*},I_Z)+\chi(I_Z,I_Z)\\
	&=-\left\langle v(E),v(E) \right\rangle+2(\chi(E)-rk)+(2-2k)\\
	&=2\left(-\frac{v(E)^2}{2}+\chi(E)-(r+1)k+1 \right) 
\end{align*}
Using the inequality \eqref{ineq} shows that we have $\chi(G,G)\geqslant 4$. But this implies 
\begin{equation*}
	\mathrm{hom}_X(G,G)+\mathrm{ext}^2_X(G,G)\geqslant 4.
\end{equation*}
One more application of Serre duality thus gives $\mathrm{hom}_X(G,G)\geqslant 2$. But this impossible, since $G$ is simple, as it is $\mu_h$-stable. So we do have $\mathrm{H}^1(X,E\otimes I_Z)=0$.
\end{proof}

\begin{cor}\label{eulch}
	Let $E$ be a locally free $\mu_h$-stable sheaf with Mukai vector $v=(r,h,s)$ satisfying the inequality \eqref{ineq}, then
	\begin{align*}
		\chi(E)&=\mathrm{h}^0(X,E)=r+s\\
		\chi(E\otimes I_Z)&=\mathrm{h}^0(X,E\otimes I_Z)=r+s-rk.
	\end{align*}
\end{cor}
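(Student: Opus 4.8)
The plan is to separate the two assertions in each line: the arithmetic identities $\chi(E)=r+s$ and $\chi(E\otimes I_Z)=r+s-rk$, which are pure Riemann--Roch, and the coincidences $\chi=\mathrm{h}^0$, which amount to the vanishing of higher cohomology already supplied by Lemma \ref{cohvan}.

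First I would compute the two Euler characteristics. Since $v(E)=(r,h,s)$ and $v(\OO_X)=(1,0,1)$, the identity $\chi(E)=\chi(\OO_X,E)=-\left\langle v(\OO_X),v(E)\right\rangle$ (the same pairing used in the proof of Lemma \ref{cohvan}) gives $\chi(E)=r+s$ at once. For the twisted sheaf I would use the structure sequence
\begin{equation*}
0\to E\otimes I_Z\to E\to E\otimes\OO_Z\to 0,
\end{equation*}
and note that since $E$ is locally free of rank $r$ and $Z$ has length $k$, the quotient $E\otimes\OO_Z$ has length $rk$, so that $\chi(E\otimes\OO_Z)=rk$; additivity of $\chi$ then yields $\chi(E\otimes I_Z)=(r+s)-rk$.

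Next I would upgrade these numerical identities to the statements about $\mathrm{h}^0$. For $E\otimes I_Z$ this is immediate: Lemma \ref{cohvan} gives $\mathrm{H}^1(X,E\otimes I_Z)=\mathrm{H}^2(X,E\otimes I_Z)=0$, hence $\chi(E\otimes I_Z)=\mathrm{h}^0(X,E\otimes I_Z)$. For $E$ itself the lemma does not literally apply (as $k\geqslant 1$), so I would feed its vanishing back through the same structure sequence: the sheaf $E\otimes\OO_Z$ is supported in dimension zero, whence $\mathrm{H}^1(X,E\otimes\OO_Z)=\mathrm{H}^2(X,E\otimes\OO_Z)=0$, and the long exact cohomology sequence then squeezes $\mathrm{H}^1(X,E)$ and $\mathrm{H}^2(X,E)$ between vanishing groups, forcing them to vanish as well. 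Equivalently, $\mathrm{H}^2(X,E)\cong\mathrm{H}^0(X,E^{*})^{\vee}=0$ by Serre duality and the stability of $E^{*}$, exactly as in the opening of the proof of Lemma \ref{cohvan}. This gives $\chi(E)=\mathrm{h}^0(X,E)$.

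There is no genuine obstacle in this corollary: all of the analytic difficulty has already been absorbed into Lemma \ref{cohvan}, and what remains is bookkeeping. The only point requiring a little care is that the cohomological vanishing must be deduced for $E$ and not merely for $E\otimes I_Z$; this is handled cleanly by transporting the vanishing across the structure sequence above, using that $E\otimes\OO_Z$ is a zero-dimensional sheaf.
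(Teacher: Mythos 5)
Your proof is correct and is exactly the argument the paper leaves implicit: the paper states this corollary without proof, since it follows from Lemma \ref{cohvan} by the Mukai-pairing/Riemann--Roch computation $\chi(E)=-\left\langle v(\OO_X),v(E)\right\rangle=r+s$, the structure sequence $0\to E\otimes I_Z\to E\to E\otimes\OO_Z\to 0$, and the cohomology vanishing. Your care in transporting the vanishing from $E\otimes I_Z$ to $E$ itself (via the zero-dimensionality of $E\otimes\OO_Z$, or alternatively Serre duality for $\mathrm{H}^2$) is precisely the small detail the paper's omitted proof requires.
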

	
We are now ready to study the image of a $\mu_h$-stable locally free sheaf with Mukai vector $v=(r,h,s)$ under the $\mathbb{P}^{k-1}$-functor $\Phi$. A priori this object is just a complex in $\De(\Xk)$ but in our situation we have:
	\begin{lem}\label{def:KE}
			Let $E$ be a locally free $\mu_h$-stable sheaf with Mukai vector $v=(r,h,s)$ satisfying the inequality \eqref{ineq}, then $\Phi(E)$ is a locally free sheaf of rank $r+s-rk$ on $\Xk$.
	\end{lem}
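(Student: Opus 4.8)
The plan is to show that $\Phi(E)=Rp_*(q^*E\otimes\IZ)$ is concentrated in cohomological degree $0$ and is locally free there, by feeding the vanishing from Lemma~\ref{cohvan} into the cohomology-and-base-change machinery for the projection $p\colon X\times\Xk\to\Xk$. First I would observe that, since $E$ is locally free, so is $q^*E$; tensoring with it is therefore exact, so the derived tensor $q^*E\otimes^{L}\IZ$ is simply the sheaf $\mathcal{G}:=q^*E\otimes\IZ$ and $\Phi(E)=Rp_*\mathcal{G}$. Thus it suffices to analyse the higher direct images of the single sheaf $\mathcal{G}$.

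The next step is to establish that $\mathcal{G}$ is flat over $\Xk$, which is what allows base change to be applied. This reduces to the flatness of $\IZ$: from the defining short exact sequence
$$0\to\IZ\to\OO_{X\times\Xk}\to\OO_{\mathcal{Z}}\to 0,$$
where $\mathcal{Z}\subset X\times\Xk$ is the universal subscheme, both $\OO_{X\times\Xk}$ and $\OO_{\mathcal{Z}}$ are flat over $\Xk$ (the former as the structure sheaf of a product, the latter because $\mathcal{Z}\to\Xk$ is finite flat of degree $k$), so the long exact $\mathrm{Tor}$-sequence over $\Xk$ forces $\IZ$ to be $\Xk$-flat as well. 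Tensoring with the locally free sheaf $q^*E$ preserves flatness, giving flatness of $\mathcal{G}$. Flatness also guarantees that restricting the above sequence to a fibre $X\times\{[Z]\}\cong X$ stays exact, so that $\mathcal{G}|_{X\times\{[Z]\}}\cong E\otimes I_Z$.

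With these identifications in place I would invoke cohomology and base change along $p$. The fibrewise cohomology is $H^i(X,E\otimes I_Z)$, and Lemma~\ref{cohvan} gives $H^1(X,E\otimes I_Z)=H^2(X,E\otimes I_Z)=0$ for every $[Z]\in\Xk$. Since $X$ is a surface, the only surviving cohomology sits in degree $0$; hence $R^ip_*\mathcal{G}=0$ for all $i>0$. As the dimension $h^0(X,E\otimes I_Z)$ is constant, equal to $r+s-rk$ by Corollary~\ref{eulch}, Grauert's theorem shows that $R^0p_*\mathcal{G}=p_*\mathcal{G}$ is locally free of rank $r+s-rk$, with base change holding fibrewise. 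Therefore $\Phi(E)\cong p_*\mathcal{G}$ is precisely the locally free sheaf of rank $r+s-rk$ asserted in the statement. The main point to be careful about is bookkeeping rather than anything conceptual: one must check that $q^*E\otimes\IZ$ already computes the derived tensor (handled by local freeness of $E$) and that $\IZ$ is $\Xk$-flat (handled by flatness of the universal family), after which the genuine work has already been done in Lemma~\ref{cohvan} and the conclusion follows from the standard base-change theorems.
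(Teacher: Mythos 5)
Your proof is correct and follows essentially the same route as the paper: cohomology and base change along $p$, using the vanishing of Lemma~\ref{cohvan} to kill $R^ip_*$ for $i>0$, and the constancy of $h^0(X,E\otimes I_Z)$ from Corollary~\ref{eulch} together with Grauert's theorem to get local freeness and the rank $r+s-rk$. The only difference is that you spell out the flatness of $\IZ$ over $\Xk$ and the derived-tensor bookkeeping, details the paper leaves implicit.
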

	
	\begin{proof}
		
		Using cohomology and base change results, the vanishing in Lemma \ref{cohvan} implies 
		\begin{equation*}
		R^ip_{*}( q^{*}(E)\otimes \IZ)=0\,\,\,\text{for}\,\, i=1,2.
		\end{equation*}
		Consequently $\Phi(E)=p_{*}(q^{*}E\otimes \IZ)$ is a sheaf. Furthermore the map
		\begin{equation*}
			\Xk \rightarrow \mathbb{Z},\,\,\,[Z]\mapsto \mathrm{h}^0(X,E\otimes I_Z)
		\end{equation*}
		is constant. Therefore $p_{*}(q^{*}E\otimes \IZ)$, that is $\Phi(E)$, is indeed a locally free sheaf on $\Xk$. As the fiber at a point $[Z]$ is just $\mathrm{H}^0(X,E\otimes I_Z)$ the rank follows from Corollary \ref{eulch}.
	\end{proof}
	
Next we want to study the slope-stability of $\Phi(E)$. For this we recall that in our situation we have 
	\begin{equation*}
		\NS(\X)=\ZZ h_k\oplus\ZZ\delta,
	\end{equation*}
	where $h_k$ is the divisor on $\X$ induced by the divisor $h$ on $X$ and $2\delta$ is the exceptional divisor of the Hilbert-Chow morphism $\X\rightarrow X^{(k)}$.
	For any coherent sheaf $F$ on $X$ we denote the associated coherent tautological sheaf by
	\begin{equation*}\label{eqn:taut}
		F^{[k]}:=p_{*}\left(q^{*}F\otimes \OO_{\mathcal{Z}} \right). 
	\end{equation*}
	
	\begin{lem}
		\label{lem:Chernclass}
		Let $E$ be a locally free $\mu_h$-stable sheaf with Mukai vector $v=(r,h,s)$ satisfying the inequality \eqref{ineq}, then $c_1(\Phi(E))=-h_k+r\delta$.
	\end{lem}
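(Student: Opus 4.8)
The plan is to compare $\Phi(E)$ with the tautological bundle $E^{[k]}$ by pushing forward the structure sequence of the universal subscheme, and then to invoke the standard formula for the first Chern class of a tautological bundle. First I would tensor the exact sequence $0\to\IZ\to\OO_{X\times\Xk}\to\OO_{\mathcal{Z}}\to 0$ with the locally free sheaf $q^{*}E$, which stays exact and yields
\begin{equation*}
	0\to q^{*}E\otimes\IZ\to q^{*}E\to q^{*}E\otimes\OO_{\mathcal{Z}}\to 0.
\end{equation*}
Applying $Rp_{*}$ produces a distinguished triangle whose three terms I can identify explicitly. The left term is $Rp_{*}(q^{*}E\otimes\IZ)=\Phi(E)$, which is a genuine locally free sheaf by Lemma \ref{def:KE}. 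By flat base change along the projection $p$ (whose fibre is $X$), the middle term is $Rp_{*}(q^{*}E)\cong \mathrm{H}^{*}(X,E)\otimes_{\CC}\OO_{\Xk}$; since $\mathrm{H}^{2}(X,E)=\mathrm{H}^{0}(X,E^{*})^{\vee}=0$ and $\chi(E)=\mathrm{h}^{0}(X,E)=r+s$ by Corollary \ref{eulch} force $\mathrm{H}^{1}(X,E)=0$ as well, this is just the trivial bundle $\OO_{\Xk}^{\oplus(r+s)}$, with vanishing first Chern class. Finally, as $p|_{\mathcal{Z}}\colon\mathcal{Z}\to\Xk$ is finite, the higher direct images of $q^{*}E\otimes\OO_{\mathcal{Z}}$ vanish, so the right term is concentrated in degree zero and equals the tautological bundle $E^{[k]}=p_{*}(q^{*}E\otimes\OO_{\mathcal{Z}})$.

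Passing to the Grothendieck group, the triangle gives $[\Phi(E)]=(r+s)[\OO_{\Xk}]-[E^{[k]}]$, and taking first Chern classes I obtain $c_{1}(\Phi(E))=-c_{1}(E^{[k]})$. It therefore remains to show $c_{1}(E^{[k]})=h_{k}-r\delta$. Here I would use that the assignment $F\mapsto F^{[k]}$ is exact on locally free sheaves (because $F\mapsto q^{*}F\otimes\OO_{\mathcal{Z}}$ is exact and $p_{*}$ is exact on sheaves supported finitely over $\Xk$), so $c_{1}(E^{[k]})$ depends only on the class of $E$ in $K$-theory. Writing $[E]=[\det E]+(r-1)[\OO_{X}]$ with $\det E$ a line bundle of class $c_1(\det E)=h$, additivity reduces the computation to the two standard cases $c_{1}(\OO_{X}^{[k]})=-\delta$ and $c_{1}(L^{[k]})=(c_{1}(L))_{k}-\delta$ for a line bundle $L$, giving
\begin{equation*}
	c_{1}(E^{[k]})=(h_{k}-\delta)+(r-1)(-\delta)=h_{k}-r\delta,
\end{equation*}
and hence $c_{1}(\Phi(E))=-h_{k}+r\delta$, as claimed.

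I expect the main obstacle to be the tautological Chern class input $c_{1}(E^{[k]})=h_{k}-r\delta$: while the reduction to line bundles is clean, one must either cite the known formula for first Chern classes of tautological bundles on Hilbert schemes of points or justify it via the two base cases above, being careful about the normalization of $\delta$ (recall $2\delta$ is the exceptional divisor of the Hilbert–Chow morphism). The remaining ingredients—exactness of the tensored structure sequence, flat base change for $Rp_{*}(q^{*}E)$, and the finiteness of $p|_{\mathcal{Z}}$ ensuring the degree-zero concentration of $E^{[k]}$—are routine and follow from the locally free hypothesis on $E$ together with the cohomology vanishing already established in Lemma \ref{cohvan} and Corollary \ref{eulch}.
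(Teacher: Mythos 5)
Your first half is exactly the paper's argument: the short exact sequence
\begin{equation*}
0 \to \Phi(E) \to \mathrm{H}^0(X,E)\otimes\OO_{\X} \to E^{[k]} \to 0
\end{equation*}
(the paper obtains it from the vanishing $R^1p_*(q^*E\otimes\IZ)=0$ of Lemma \ref{def:KE}; your derived-pushforward triangle plus the identification of its three terms is the same computation), giving $c_1(\Phi(E))=-c_1(E^{[k]})$. The divergence is in the last step. The paper simply cites \cite[Lemma 1.5]{wandel} for $c_1(E^{[k]})=c_1(E)_k-\rk(E)\delta$, whereas you attempt to reprove this formula by K-theoretic reduction to line bundles, and there your argument has a genuine error: the identity $[E]=[\det E]+(r-1)[\OO_X]$ in $K(X)$ is false in general. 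Both sides have the same rank and first Chern class, but $\mathrm{ch}_2(E)=s-r$ (from $v(E)=(r,h,s)$) while $\mathrm{ch}_2(\det E)=h^2/2$, and these differ unless $s-r=h^2/2$. In the paper's own first example ($v=(3,h,8)$, $h^2=50$) one has $\mathrm{ch}_2(E)=5\neq 25$, so the classes are genuinely distinct. Consequently the additivity of $F\mapsto c_1(F^{[k]})$ on K-theory, which you did establish correctly, does not by itself reduce the computation to your two line bundle cases.

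The gap is repairable. The discrepancy $[E]-[\det E]-(r-1)[\OO_X]$ has rank zero and trivial first Chern class, hence on a K3 surface it is a multiple of the class $[\OO_x]$ of a point. So one must additionally check that the K-theoretic transform of a point class, $Rp_*(q^*\OO_x\otimes^{L}\OO_{\mathcal{Z}})$, contributes nothing to $c_1$; this holds because its cohomology sheaves are supported on the incidence locus $\{[Z]\in\X : x\in\supp Z\}$, which has codimension $2$ in $\X$. Note that the derived tensor product is unavoidable here, since $\OO_x$ is not locally free, so your exactness argument for short exact sequences of locally free sheaves does not apply to this class directly. With this extra step added --- or by simply citing the known tautological Chern class formula, as the paper does --- your proof closes; the remaining ingredients (the identification of the three terms of the triangle, the vanishing $\mathrm{H}^1(X,E)=\mathrm{H}^2(X,E)=0$ via Corollary \ref{eulch} and stability of $E^*$, and the normalization $c_1(\OO_X^{[k]})=-\delta$) are all correct.
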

	
	\begin{proof}
		By Lemma \ref{def:KE} we have $R^1p_{*}(q^{*}E\otimes \mathcal{I}_{\mathcal{Z}})=0$, thus there is an exact sequence:
		\begin{equation}\label{seq1}
			\begin{tikzcd}
				0 \arrow[r] & p_{*}(q^{*}E\otimes \IZ) \arrow[r] &  p_{*}q^{*}E \arrow[r] & p_{*}(q^{*}E\otimes \OO_{\mathcal{Z}}) \arrow[r] & 0
			\end{tikzcd}
		\end{equation}
		
		We have 
		\begin{equation*}
			p_{*}q^{*}E\cong \mathrm{H}^0(E)\otimes\OO_{\X}
		\end{equation*}
		and the sheaf $p_{*}(q^{*}E\otimes \OO_{\mathcal{Z}})$ is by definition the tautological bundle $E^{[k]}$. The exact sequence \eqref{seq1} can be rewritten as
		\begin{equation}
			\label{eqn:sheafK}
			\begin{tikzcd}
				0 \arrow[r] & \Phi(E) \arrow[r] & \mathrm{H}^0(E)\otimes \OO_{\X} \arrow[r] & E^{[k]} \arrow[r] & 0.
			\end{tikzcd}
		\end{equation}
		
		Using \cite[Lemma 1.5]{wandel} we get
		\begin{equation*}
			c_1(\Phi(E))=-c_1(E^{[k]})=-c_1(E)_k+r\delta=-h_k+r\delta.
		\end{equation*}
	\end{proof}

	We also recall the notations introduced by Stapleton in \cite[Section 1]{stapleton}. The ample divisor $h$ on $X$ induces the ample divisor
	\begin{equation*}
		h_{X^k} =\bigoplus\limits_{i=1}^k q_i^\ast h
	\end{equation*}
	on $X^k$, where $q_i$ is the $i$-th projection from $X^k$, as well as a semi-ample divisor $h_k$ on $\X$. 
	
	We denote by $X^k_\circ$, $S^kX_\circ$ and $\X_\circ$ the loci of the relevant spaces parametrizing distinct points. The natural map
	\begin{equation*}
		\overline{\sigma}_\circ: X^k_\circ \to \X_\circ
	\end{equation*}
	is an \'etale cover and $j: X^k_\circ \to X^k$ is an open embedding. Given a coherent sheaf $F$ on $\X$, we denote by $F_\circ$ the restriction of $F$ to $\X_\circ$, and define
	\begin{equation*}
		(F)_{X^k} = j_\ast (\overline{\sigma}_\circ^\ast(F_\circ))
	\end{equation*}
	which is a torsion free coherent sheaf on $X^k$ if $F$ is on $\Xk$. Coherence follows since the complement of $X^k_{\circ}$ has codimension two, see e.g. \cite[Th\'{e}or\`{e}me 1, Th\'{e}or\`{e}me 2]{serre}. It is torsion free since $j: X^k_{\circ}\hookrightarrow X^k$ is dominant, see \cite[Proposition 7.4.5]{groth}.
	
	\begin{lem}\label{globsec}
		Let $E$ be a locally free $\mu_h$-stable sheaf with Mukai vector $v=(r,h,s)$ satisfying the inequality \eqref{ineq}, then
		\begin{equation*}
			\mathrm{H}^0(X^k,\left( \Phi(E)\right)_{X^k} )=0
		\end{equation*}
	\end{lem}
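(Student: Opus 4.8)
The plan is to apply the operation $(-)_{X^k}=j_\ast(\overline{\sigma}_\circ^\ast((-)_\circ))$ to the short exact sequence \eqref{eqn:sheafK} and thereby reduce the vanishing to a transparent computation on $X^k$. The key structural point is that $(-)_{X^k}$ is the composite of two exact functors, namely restriction to the open locus $\X_\circ$ and pullback along the \'etale cover $\overline{\sigma}_\circ$, followed by the left-exact pushforward $j_\ast$. Restricting \eqref{eqn:sheafK} to $\X_\circ$ and pulling back along $\overline{\sigma}_\circ$ keeps it short exact on $X^k_\circ$, and applying $j_\ast$ then yields a left-exact sequence on $X^k$. Since we only want to bound $\mathrm{H}^0$, left-exactness is all that is required and the possible loss of surjectivity is harmless.

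Next I would identify the three resulting terms on $X^k$. The middle term is immediate: $\overline{\sigma}_\circ^\ast$ sends $\mathrm{H}^0(E)\otimes\OO_{\X_\circ}$ to $\mathrm{H}^0(E)\otimes\OO_{X^k_\circ}$, and since $X^k\setminus X^k_\circ$ is the big diagonal, which has codimension $2$ because $X$ is a surface, the pushforward of a locally free sheaf from $X^k_\circ$ coincides with its locally free extension; thus $j_\ast$ returns $\mathrm{H}^0(E)\otimes\OO_{X^k}$. For the tautological term I would invoke the standard description of tautological sheaves over the distinct-points locus (as in \cite{stapleton}): on $X^k_\circ$ the pullback of the universal subscheme is the disjoint union of the graphs of the projections $q_i$, so $\overline{\sigma}_\circ^\ast(E^{[k]})_\circ\cong\bigoplus_{i=1}^k (q_i^\ast E)|_{X^k_\circ}$, and the same codimension-$2$ extension gives $j_\ast$ of this as $\bigoplus_{i=1}^k q_i^\ast E$. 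Hence I obtain a left-exact sequence
\begin{equation*}
	0 \to (\Phi(E))_{X^k} \to \mathrm{H}^0(E)\otimes\OO_{X^k} \xrightarrow{\ \rho\ } \bigoplus_{i=1}^k q_i^\ast E,
\end{equation*}
where $\rho$ is the extension of the evaluation map in \eqref{eqn:sheafK}.

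Finally I would take global sections, again left-exact, to get
\begin{equation*}
	0 \to \mathrm{H}^0\big(X^k,(\Phi(E))_{X^k}\big) \to \mathrm{H}^0(E)\otimes\mathrm{H}^0(\OO_{X^k}) \xrightarrow{\ \psi\ } \bigoplus_{i=1}^k \mathrm{H}^0(X^k, q_i^\ast E).
\end{equation*}
By the K\"unneth formula $\mathrm{H}^0(\OO_{X^k})=\CC$, while the projection formula together with $q_{i\ast}\OO_{X^k}=\OO_X$ gives $\mathrm{H}^0(X^k,q_i^\ast E)\cong\mathrm{H}^0(X,E)$. Tracing the evaluation map through these identifications, a constant section $s\in\mathrm{H}^0(E)$ maps in the $i$-th component to $q_i^\ast s$, which corresponds back to $s$; hence $\psi$ is the diagonal $s\mapsto(s,\ldots,s)$, which is injective. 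Therefore $\ker\psi=0$ and $\mathrm{H}^0(X^k,(\Phi(E))_{X^k})=0$.

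The main obstacle I anticipate is bookkeeping rather than a genuine difficulty: one must verify that the evaluation map really becomes the diagonal after restriction, \'etale pullback, $j_\ast$, and passage to global sections, and that each identification of terms — especially $\overline{\sigma}_\circ^\ast(E^{[k]})_\circ\cong\bigoplus_i (q_i^\ast E)|_{X^k_\circ}$ and the codimension-$2$ extension across the big diagonal — is correctly justified. Once the diagonal description of $\psi$ is in hand, injectivity and hence the vanishing are immediate; note in particular that the argument does not even use $\mathrm{H}^0(X,E)\neq 0$, although that follows from Corollary \ref{eulch}.
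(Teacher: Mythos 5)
Your proposal is correct and follows essentially the same route as the paper: apply $(-)_{X^k}$ to \eqref{eqn:sheafK}, use left-exactness of $j_\ast$, identify the middle and right-hand terms as $\mathrm{H}^0(E)\otimes\OO_{X^k}$ and $\bigoplus_{i=1}^k q_i^\ast E$ (Stapleton's Lemma 1.1), and conclude by showing the induced map on global sections is injective. The only cosmetic difference is how that injectivity is verified: you identify the map as the diagonal $s\mapsto(s,\ldots,s)$ via the K\"unneth/projection-formula isomorphisms, while the paper argues pointwise, evaluating a nonzero section at a suitable tuple of distinct points.
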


\begin{proof}
	Note that $(-)_\circ$ and $\overline{\sigma}_\circ^\ast(-)$ are exact, and $j_\ast(-)$ is left exact. Applying these functors to \eqref{eqn:sheafK} we obtain an exact sequence of $\mathfrak{S}_k$-invariant reflexive sheaves on $X^k$:
			\begin{equation}
		\begin{tikzcd}
			0 \arrow[r] & \left( \Phi(E)\right) _{X^k} \arrow[r] & \left( \mathrm{H}^0(E)\otimes \OO_{\X}\right)_{X^k}  \arrow[r,"\varphi"] & \left( E^{[k]}\right)_{X^k}
		\end{tikzcd}
	\end{equation}
where $\varphi$ may not be surjective. Certainly we have 
$$ (\mathrm{H}^0(E)\otimes \OO_{\X})_{X^k} = \mathrm{H}^0(E) \otimes \OO_{X^k}, $$
and also by \cite[Lemma 1.1]{stapleton}
$$ (E^{[k]})_{X^k} = \bigoplus\limits_{i=1}^k q_i^\ast E.$$
The above sequence can be written as
			\begin{equation}	\label{eqn:equisheaf}
	\begin{tikzcd}
		0 \arrow[r] & \left( \Phi(E)\right) _{X^k} \arrow[r] &  \mathrm{H}^0(E)\otimes \OO_{X^k}  \arrow[r,"\varphi"] & \bigoplus\limits_{i=1}^k q_i^{*}E.
	\end{tikzcd}
\end{equation} 

More accurately, $\varphi$ is the evaluation map on $X^k_\circ$: for any $k$-tuple $(x_1,\ldots, x_k) \in X^k$ of closed points with $x_i \neq x_j$, the morphism of fibers can be written as
\begin{align*}
	\varphi_{(x_1,\ldots, x_k)}: \mathrm{H}^0(E) &\longrightarrow \bigoplus\limits_{i=1}^k E_{x_i} \\
	s &\longmapsto (s(x_1),\ldots, s(x_k))
\end{align*}
Since for a non-trivial section $s \in \mathrm{H}^0(E)$, one can always choose a $k$-tuple of distinct points $(x_1,\ldots x_k) \in X^k$ with $(s(x_1),\ldots, s(x_k)) \neq (0,\ldots, 0)$, we see that the induced map
$$ \mathrm{H}^0(\varphi): \mathrm{H}^0(E) \longrightarrow \mathrm{H}^0(\bigoplus\limits_{i=1}^k q_i^\ast E) $$
is injective. It follows by exact sequence \eqref{eqn:equisheaf} that $\left(\Phi(E) \right) _{X^k}$ has no global sections.
\end{proof}
	
	\begin{prop}
		\label{prop:stableK}
		Let $E$ be a locally free $\mu_h$-stable sheaf with Mukai vector $v=(r,h,s)$ satisfying the inequality \eqref{ineq}, then the locally free sheaf $\Phi(E)$ defined in Lemma \ref{def:KE} is slope stable with respect to $h_k$.
	\end{prop}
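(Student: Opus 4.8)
The plan is to transport the whole question to the product $X^k$, where $\Phi(E)$ becomes the kernel of an evaluation map. First I would record that $h_k=\pi^\ast H$ is pulled back from an ample class $H$ on $S^kX$ along the Hilbert--Chow morphism $\pi\colon\Xk\to S^kX$, which contracts the exceptional divisor $2\delta$ onto a locus of codimension two. The projection formula then gives $\delta\cdot h_k^{2k-1}=H^{2k-1}\cdot\pi_\ast\delta=0$ and $h_k^{2k}=H^{2k}>0$, so by Lemma \ref{lem:Chernclass}
\begin{equation*}
\mu_{h_k}(\Phi(E))=\frac{(-h_k+r\delta)\cdot h_k^{2k-1}}{r+s-rk}=\frac{-h_k^{2k}}{r+s-rk}<0.
\end{equation*}
Since the number $c_1(F)\cdot h_k^{2k-1}$ depends only on the restriction of $F$ to the open locus $\X_\circ$, the operation $F\mapsto(F)_{X^k}$ combined with the \'etale cover $\overline{\sigma}_\circ$ of degree $k!$ multiplies $h_k$-slopes by the factor $k!$ and sets up a bijection between saturated subsheaves of $\Phi(E)$ and $\mathfrak{S}_k$-invariant saturated subsheaves of $(\Phi(E))_{X^k}$. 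Hence it is enough to prove that $A:=(\Phi(E))_{X^k}$ has no $\mathfrak{S}_k$-invariant saturated subsheaf $\mathcal G$ with $0<\rk\mathcal G<\rk A$ and $\mu_{h_{X^k}}(\mathcal G)\geqslant\mu_{h_{X^k}}(A)$.

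Next I would use the sequence \eqref{eqn:equisheaf}, which realizes $A$ as the kernel of the evaluation map $\varphi\colon T\to V$ with $T=\mathrm{H}^0(E)\otimes\OO_{X^k}$ trivial of rank $\chi(E)=r+s$ and $V=\bigoplus_{i=1}^k q_i^\ast E$ of rank $rk$. A rank count shows $\varphi$ is generically surjective, so $W:=\Ima\varphi\subseteq V$ has full rank, $A$ is saturated in $T$, and Lemma \ref{globsec} gives $\mathrm{H}^0(A)=0$. For any saturated $\mathcal G\subset A$ the quotient $T/\mathcal G$ is torsion free and globally generated, hence $\mu_{h_{X^k}}(T/\mathcal G)\geqslant0$; the seesaw identity for $0\to\mathcal G\to T\to T/\mathcal G\to0$ (recall $\mu_{h_{X^k}}(T)=0$) forces $\mu_{h_{X^k}}(\mathcal G)\leqslant0$. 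If equality held, $T/\mathcal G$ would be globally generated of slope zero, hence trivial, so $\mathcal G$ would be a nonzero trivial subsheaf of $A$ with sections, contradicting $\mathrm{H}^0(A)=0$. Thus every proper saturated subsheaf of $A$ already has strictly negative slope.

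The hard part is to upgrade this crude bound $\mu_{h_{X^k}}(\mathcal G)<0$ to the genuine stability estimate $\mu_{h_{X^k}}(\mathcal G)<\mu_{h_{X^k}}(A)$, since $\mu_{h_{X^k}}(A)=-\tfrac{rk}{r+s-rk}\,\mu_{h_{X^k}}(W)$ is in general far more negative than $0$. To close this gap I would bring in the internal geometry of $V$: each $q_i^\ast E$ is $\mu_{h_{X^k}}$-stable, being the pullback of the stable bundle $E$ under the projection $q_i$ with the balanced polarization $h_{X^k}$, and since $\mathfrak{S}_k$ permutes the $k$ summands transitively, $V$ is $\mathfrak{S}_k$-equivariantly stable of slope $\mu_0=\mu_{h_{X^k}}(V)>0$. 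Given a putative destabilizing invariant $\mathcal G\subset A$, I would pass to the smallest subspace $\mathbb W\subseteq\mathrm{H}^0(E)$ with $\mathcal G\subseteq\mathbb W\otimes\OO_{X^k}$, let $V'\subseteq V$ be the subsheaf generated by $\mathbb W$, and compare slopes: the equivariant semistability bound $\mu_{h_{X^k}}(V')\leqslant\mu_0$, together with $\mathcal G\subseteq\ker(\mathbb W\otimes\OO_{X^k}\to V')$ and the numerical hypothesis \eqref{ineq} controlling $\dim\mathrm{H}^0(E)=\chi(E)$ against the ranks $r$ and $rk$, should yield $\mu_{h_{X^k}}(\mathcal G)<\mu_{h_{X^k}}(A)$. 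Establishing this last Butler-type inequality for kernels of (sub)evaluation maps in the present equivariant, higher-dimensional setting --- equivalently, proving that the globally generated dual $A^\vee$ is actually stable and not merely of nonnegative slope --- is the step I expect to be the main obstacle, and the role of the inequality \eqref{ineq} is precisely to guarantee that enough sections of $E$ are present for it to hold.
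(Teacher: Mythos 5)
Your reduction to $X^k$ and the first half of the argument track the paper's own proof closely: the exact sequence \eqref{eqn:equisheaf}, the reduction to $\mathfrak{S}_k$-invariant subsheaves via Stapleton's comparison lemma, and the use of Lemma \ref{globsec} are exactly the paper's ingredients. Your argument that every saturated subsheaf $\mathcal{G}$ of $A=(\Phi(E))_{X^k}$ has $\mu_{h_{X^k}}(\mathcal{G})\leqslant 0$, with equality impossible, is also essentially correct and parallels the paper's treatment of its cases $a\geqslant 1$ and $a=0$ (the paper rules out slope zero by extracting a stable subsheaf of maximal slope and projecting to a trivial summand to produce a copy of $\OO_{X^k}$ inside $A$, while you argue via the quotient $T/\mathcal{G}$ being globally generated of slope zero, hence trivial; both routes work, though your "rank count" justification for generic surjectivity of $\varphi$ should instead be that $\varphi$ restricts on $X^k_\circ$ to the pullback of the surjection $\mathrm{H}^0(E)\otimes\OO_{\X}\twoheadrightarrow E^{[k]}$).

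The step you flag as "the main obstacle" is a genuine gap in your write-up, but the Butler-type inequality, the stability of $\bigoplus_i q_i^\ast E$, and the analysis of $A^\vee$ that you propose are all unnecessary: you simply never used the $\mathfrak{S}_k$-invariance of $\mathcal{G}$ at the one place where it does all the work. Since $X$ is a K3 surface with $\NS(X)=\ZZ h$, we have $\NS(X^k)=\bigoplus_i q_i^\ast\NS(X)$, so an $\mathfrak{S}_k$-invariant subsheaf $\mathcal{G}\subseteq A$ has $c_1(\mathcal{G})=a\sum_i q_i^\ast h$ with $a\in\ZZ$ an \emph{integer}. Writing $d=\left(\sum_i q_i^\ast h\right)\cdot h_{X^k}^{2k-1}>0$ and recalling $c_1(A)=-\sum_i q_i^\ast h$, your bound $\mu_{h_{X^k}}(\mathcal{G})<0$ forces $a\leqslant -1$, and then
\begin{equation*}
\mu_{h_{X^k}}(\mathcal{G})=\frac{ad}{\rk\mathcal{G}}\leqslant\frac{-d}{\rk\mathcal{G}}<\frac{-d}{\rk A}=\mu_{h_{X^k}}(A),
\end{equation*}
the strict inequality holding because the numerators are negative and $\rk\mathcal{G}<\rk A$. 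In other words, integrality of $a$ means there are no invariant classes whose slope lies strictly between $-d/\rk\mathcal{G}$ and $0$, so your "crude" bound $\mu_{h_{X^k}}(\mathcal{G})<0$ already implies the genuine stability estimate; the gap you perceived closes automatically. This is precisely the paper's three-case analysis on $a$ ($a\leqslant-1$ by numerics, $a=0$ by contradiction with Lemma \ref{globsec}, $a\geqslant 1$ by semistability of the trivial bundle), and you had every piece of it in hand.
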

	
	\begin{proof}
		We follow the proof of \cite[Theorem 1.4]{stapleton} and start with the exact sequence
					\begin{equation*}
			\begin{tikzcd}
				0 \arrow[r] & \left( \Phi(E)\right) _{X^k} \arrow[r] &  \mathrm{H}^0(E)\otimes \OO_{X^k}  \arrow[r,"\varphi"] & \bigoplus\limits_{i=1}^k q_i^{*}E
			\end{tikzcd}
		\end{equation*} 
		and note that $\varphi$ is surjective on $X^k_\circ$, hence its cokernel $\cok(\varphi)$ is supported on the big diagonal of $X^k$, which is of codimension $2$. We get
		$$ c_1(\left(\Phi(E) \right)_{X^k}) = -\sum\limits_{i=1}^k q_i^\ast h. $$
		
		Now assume $G$ is a reflexive subsheaf of $\Phi(E)$. Then $(G)_{X^k}$ is an $\mathfrak{S}_k$-invariant reflexive subsheaf of $\left(\Phi(E) \right)_{X^k}$. By \cite[Lemma 1.2]{stapleton} we have
		\begin{equation*}
			\mu_{h_{X^k}}((G)_{X^k}) < \mu_{h_{X^k}}(\left(\Phi(E) \right)_{X^k}) \Leftrightarrow \mu_{h_k}(G) < \mu_{h_k}(\Phi(E)).
		\end{equation*}
	It is therefore enough to prove that $\left(\Phi(E) \right)_{X^k}$ has no $\mathfrak{S}_k$-invariant destabilizing subsheaf (with respect to $h_{X^k}$). Assume $F$ is an $\mathfrak{S}_k$-invariant subsheaf, then we find:
		\begin{equation*}
			c_1(F) = a(\sum\limits_{i=1}^k q_i^\ast h)\,\,\,\text{for some $a \in \ZZ$}.
		\end{equation*}
			
		If $a \leqslant -1$, then 
		\begin{equation*}
			c_1(F) h_{X^k}^{2k-1} \leqslant c_1(\left(\Phi(E) \right)_{X^k}) h_{X^k}^{2k-1} < 0
		\end{equation*}
		Since $1 \leqslant \rk(F) < \rk(\left(\Phi(E) \right)_{X^k})$, we see that $\mu_{h_{X^k}}(F) < \mu_{h_{X^k}}(\left(\Phi(E) \right)_{X^k})$. Hence $F$ is not destabilizing.
		
		If $a=0$, we pick a (not necessarily $\mathfrak{S}_k$-invariant) non-zero stable subsheaf $F' \subseteq F$ that has maximal slope with respect to $h_{X^k}$ (for example one could take a stable factor in the first Harder-Narasimhan factor of $F$). Without loss of generality, we may assume $F$ and $F'$ are both reflexive. Since $F'$ is also a subsheaf of $\mathrm{H}^0(E)\otimes \OO_{X^k}$, there is a projection from $\mathrm{H}^0(E)\otimes \OO_{X^k}$ to a certain direct summand of it, such that the composition of the embedding and projection $$F' \rightarrow \mathrm{H}^0(E)\otimes \OO_{X^k} \rightarrow \OO_{X^k}$$ is non-zero. Since $$\mu_{X^k}(F') \geqslant \mu_{X^k}(F) = 0 = \mu_{X^k}(\OO_{X^k}),$$ and $\OO_{X^k}$ is also stable with respect to $h_{X^k}$, the map $F' \rightarrow \OO_{X^k}$ must be injective, and its cokernel is supported in codimension at least $2$. Since both sheaves are reflexive, we must have $F' = \OO_{X^k}$. As a result $F$, and consequently $\left(\Phi(E)\right)_{X^k}$, have non-trivial global sections, a contradiction to Lemma \ref{globsec}.
		
		If $a \geqslant 1$, $F$ would be a subsheaf of the trivial bundle $\mathrm{H}^0(E) \otimes \OO_{X^k}$ of positive slope which is not possible since a trivial bundle is semistable of slope zero.
	\end{proof}
	
	\begin{thm}\label{prop:sameH1}
		Let $E$ be a locally free $\mu_h$-stable sheaf with Mukai vector $v=(r,h,s)$ satisfying the inequality \eqref{ineq}, then $\Phi(E)$ is a locally free $\mu_H$-stable sheaf for some ample class $H \in \NS(\X)$ near $h_k$.
	\end{thm}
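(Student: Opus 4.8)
Everything except the last step is already in place: $\Phi(E)$ is locally free (Lemma~\ref{def:KE}) with $c_1(\Phi(E)) = -h_k + r\delta$ (Lemma~\ref{lem:Chernclass}), and is $\mu_{h_k}$-stable (Proposition~\ref{prop:stableK}). The plan is therefore to show that $\mu_{h_k}$-stability survives a small perturbation of the semi-ample class $h_k$ into the ample cone. I would work throughout with the basis $\NS(\X) = \ZZ h_k\oplus\ZZ\delta$ and use that $h_k$ is pulled back along the Hilbert--Chow morphism, so that $h_k^{2k-1}\cdot\delta = 0$ and $h_k^{2k-2}\cdot\delta^2 < 0$, and that $H_t := h_k - t\delta$ is ample for all sufficiently small $t > 0$. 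The goal is to produce one such $t$ for which $\Phi(E)$ is $\mu_{H_t}$-stable.

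First I would extract a uniform slope gap at $t = 0$. For a saturated subsheaf $G\subset\Phi(E)$ with $c_1(G) = a h_k + b\delta$ (necessarily $a,b\in\ZZ$) and $0 < \rk(G) =: m < N := \rk(\Phi(E))$, the vanishing $h_k^{2k-1}\cdot\delta = 0$ makes the $h_k$-slope depend only on the integer $a$ and on $m$: one has $\mu_{h_k}(G) = \tfrac{a}{m}h_k^{2k}$ and $\mu_{h_k}(\Phi(E)) = \tfrac{-1}{N}h_k^{2k}$. Since $a$ is an integer and $m\le N-1$, the fractions $\tfrac am$ lying strictly below $\tfrac{-1}N$ cannot approach it, so there is a constant $\gamma > 0$ depending only on $N$ with $\mu_{h_k}(\Phi(E)) - \mu_{h_k}(G)\ge\gamma\,h_k^{2k}$ for every such $G$. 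Next I would expand the perturbed slopes: using $h_k^{2k-1}\cdot\delta = 0$ again,
\[
\mu_{H_t}(\Phi(E)) - \mu_{H_t}(G) = \big(\mu_{h_k}(\Phi(E)) - \mu_{h_k}(G)\big) + (2k-1)\,t\,c\,\big(\tfrac{r}{N} - \tfrac{b}{m}\big) + O(t^2),
\]
where $c := -h_k^{2k-2}\cdot\delta^2 > 0$. The leading term is bounded below by $\gamma\,h_k^{2k} > 0$, so the only threat comes from subsheaves with large positive $\delta$-degree $b$. To bound these I would invoke the exact sequence~\eqref{eqn:sheafK}, which realises $\Phi(E)$, and hence $G$, as a subsheaf of the trivial bundle $H^0(E)\otimes\OO_{\X}$; taking determinants shows $-c_1(G) = -a h_k - b\delta$ is effective, and since the effective cone of $\X$ is spanned by $\delta$ and a ray of the form $h_k - t_1\delta$, this yields the bound $b\le t_1|a|$.

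The main obstacle is then turning these pointwise estimates into a single choice of $t$ valid over the infinitely many subsheaves $G$ --- the real difficulty created by $h_k$ lying on the boundary of the ample cone. I would split the argument according to the size of $\tfrac{|a|}m$. When $\tfrac{|a|}m$ is bounded, the bound $b\le t_1|a|$ forces $a$, $b$ and $m$ into a finite set, so only finitely many numerical types arise and each contributes one open condition $t < t_i$. When $\tfrac{|a|}m$ is large, the gap $\mu_{h_k}(\Phi(E)) - \mu_{h_k}(G) = (\tfrac{|a|}m - \tfrac1N)h_k^{2k}$ grows linearly in $\tfrac{|a|}m$, whereas $b\le t_1|a|$ makes both the first-order correction and the $O(t^2)$ remainder bounded by a constant times $t\,\tfrac{|a|}m$; hence for $t$ below a threshold independent of $G$ the gap dominates. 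Choosing $t$ smaller than this threshold and than the finitely many $t_i$ gives $\mu_{H_t}(\Phi(E)) > \mu_{H_t}(G)$ for all proper saturated $G$, so $H := H_t$ is the desired ample class. Equivalently, I expect this step can be phrased as the assertion that the locally finite family of numerical walls $\{\mu_H(G) = \mu_H(\Phi(E))\}$ does not accumulate at $h_k$, so that a full neighbourhood of $h_k$ lies in one stability chamber; establishing this local finiteness, via boundedness of the relevant subsheaves, is where the work lies.
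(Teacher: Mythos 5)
Your plan reaches the right statement, but it is worth saying first what the paper actually does: its proof is two lines long, citing Proposition~\ref{prop:stableK} together with the hard Lefschetz theorem for semismall maps \cite[Theorem 2.3.1]{dCM} to verify the hypotheses of Stapleton's general perturbation result \cite[Proposition 4.8]{stapleton}, which directly yields stability for some ample class near $h_k$. What you have written is, in effect, a blind reconstruction of the proof of that cited proposition in this special case: the uniform slope gap at $t=0$ coming from integrality of $c_1$ and boundedness of ranks, the first-order expansion using $h_k^{2k-1}\cdot\delta=0$, and control of the dangerous subsheaves via the embedding $\Phi(E)\subset \mathrm{H}^0(E)\otimes\OO_{\X}$ from \eqref{eqn:sheafK}. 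Your route is self-contained and exploits the concrete exact sequence \eqref{eqn:sheafK}, where Stapleton's statement is general; the paper's route is shorter and rigorous by reference.

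Three points in your sketch need repair. First, and most importantly, the strict inequality $h_k^{2k-2}\cdot\delta^2<0$ is \emph{not} a formal consequence of $h_k$ being a pullback along the Hilbert--Chow morphism: for a nef class one only gets $\leqslant 0$, and strictness genuinely fails for non-semismall contractions (if a divisor $E$ is contracted to a point on a variety of dimension at least $3$, then $h^{\dim-2}\cdot E^2=0$ since $h$ restricts to zero on $E$). Strictness here is exactly the content of the de Cataldo--Migliorini citation (semismallness of Hilbert--Chow), or alternatively follows from the Fujiki relations for the Beauville--Bogomolov form on $\X$, under which $\delta$ has negative square; either way it must be invoked, not asserted. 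Second, your claim that in the bounded-$|a|/m$ regime ``$a$, $b$ and $m$ lie in a finite set'' is false as stated: effectivity of $-c_1(G)$ bounds $b$ above by $t_1|a|$ but gives no lower bound. This is harmless once you notice that the slope difference is \emph{exactly} linear in $a/m$ and $b/m$, namely $\mu_{H_t}(\Phi(E))-\mu_{H_t}(G)=uP(t)+wQ(t)$ with $u=-\tfrac1N-\tfrac am\geqslant\gamma$, $w=\tfrac rN-\tfrac bm$, and $P,Q$ fixed polynomials independent of $G$ satisfying $P(t)>0$, $Q(t)>0$ for small $t>0$: there are no per-subsheaf $O(t^2)$ error terms to control, very negative $b$ (i.e.\ large positive $w$) only helps, and the whole case analysis collapses to $w\geqslant 0$ (trivial) versus $w<0$, where $b\leqslant t_1|a|$ gives $w\geqslant -t_1u-t_1/N$ and a single threshold $t_0$ independent of $G$ suffices. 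Third, the description of the effective cone also needs an argument, though only a qualitative one: $a\leqslant 0$ follows by pushing effective classes forward along Hilbert--Chow (the pushforward of $\delta$ vanishes since its image has codimension two), and the existence of \emph{some} finite $t_1$ follows because the pseudo-effective cone is closed and salient and contains $\delta$, hence cannot have $-\delta$ as a limit ray; the precise value of $t_1$ (which is deep) is never needed. With these repairs your argument is complete and correct.
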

	
	\begin{proof}
		Proposition \ref{prop:stableK} and \cite[Theorem 2.3.1]{dCM} guarantee that the assumptions in \cite[Proposition 4.8]{stapleton} are satisfied for $\Phi(E)$, hence $\Phi(E)$ is slope stable with respect to some ample class $H$ near $h_k$ by \cite[Proposition 4.8]{stapleton}.		
	\end{proof}

\begin{rem}
\begin{enumerate}[a)]
\item Here we understand slope stability with respect to the non-ample divisor $h_k$ as slope stability with respect to a movable curve class. This stability was studied in detail by Greb, Kebekus and Peternell in \cite[Section 2.2]{greb16}. They show that all elementary properties satisfied by sheaves that are stable with respect to an ample polarization also hold for this notion of stability.
\item We restrict to locally free sheaves in this paper for two reasons: the first is to pass freely between $\mathrm{H}^{*}(E\otimes I_Z)$ and $\Ext_X^{2-*}(I_Z,E^{*})^{\vee}$ in Lemma \ref{cohvan}. The second reason is that we use Stapleton's description of $(E^{[k]})_{X^k}$, see \cite[Lemma 1.1]{stapleton}. This result uses the fact that $(E^{[k]})_{X^k}$ and $E^{\boxplus k}$ are reflexive (since they are locally free). But this fails if we start with a torsion free but not locally free sheaf $E$.
\end{enumerate}
\end{rem}

\section{A morphism of moduli spaces}
In the last section we saw that given a locally free $\mu_h$-stable sheaf $E$ with Mukai vector $v$ satisfying the inequality \eqref{ineq}, then there is an ample class $H\in \NS(\Xk)$ such that $\Phi(E)$ is a locally free $\mu_H$-stable sheaf on $\Xk$.

In this section we want to see that in certain cases we get a morphism 
\begin{equation*}
	\M_{X,h}(v) \rightarrow \mathcal{M},\,\,\,[E]\mapsto \left[ \Phi(E)\right]
\end{equation*}
for some moduli space $\mathcal{M}$ of stable sheaves on $\Xk$.

Let $v=(r,h,s)$ be a Mukai vector satisfying the following conditions:
\begin{enumerate}
	\item $v$ satisfies the inequality \eqref{ineq},
	\item all sheaves classified by $\M_{X,h}(v)$ are locally free and
	\item the moduli space $\M_{X,h}(v)$ is fine.
\end{enumerate}

Let $v=(r,h,s)$ be a Mukai vector that satisfies the conditions $(1)$ and $(2)$. Then for every $[E]\in \M_{X,h}(v)$ we know that there is an ample class $H$ such that $\Phi(E)$ is $\mu_H$-stable. One may ask how $H$ depends on $[E]$. This question is answered in the following theorem.

\begin{thm}\label{sameH2}
	Let $v$ be a Mukai vector that satisfies conditions $(1)$ and $(2)$, then there is an ample class $H\in \NS(\Xk)$ such that $\Phi(E)$ is $\mu_H$-stable for all $[E]\in \M_{X,h}(v)$ simultaneously.
\end{thm}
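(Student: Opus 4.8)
The plan is to reduce the simultaneous statement to the single fact that all the sheaves $\Phi(E)$ share the same numerical type. First I would observe that, since $\Phi$ is an exact integral functor with one fixed kernel $\IZ$, it induces a single homomorphism on Grothendieck groups; consequently the Mukai vector $v(\Phi(E))$ is determined by $v(E)=v$ alone and is independent of the point $[E]\in M_{X,h}(v)$. Lemma \ref{def:KE} and Lemma \ref{lem:Chernclass} already record the rank $r+s-rk$ and the first Chern class $-h_k+r\delta$, and the remaining Chern numbers are constant for the same reason. Write $\gamma$ for this common numerical type.

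Next I would invoke the wall-and-chamber structure for slope-stability of sheaves of type $\gamma$ as the polarisation varies over the two-dimensional ample cone inside $\NS(\Xk)_{\mathbb{R}}=\mathbb{R}h_k\oplus\mathbb{R}\delta$. The boundedness supplied by \cite[Theorem 2.3.1]{dCM}, already used in the proof of Theorem \ref{prop:sameH1}, ensures that only finitely many walls meet a fixed neighbourhood of the semi-ample class $h_k$, and this collection of walls depends only on $\gamma$. Since $h_k$ spans a boundary ray of the ample cone, there is a single chamber $C_0$ adjacent to it, likewise depending only on $\gamma$ and hence common to all $[E]$.

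Combining these, I would argue as follows. By Proposition \ref{prop:stableK} each $\Phi(E)$ is $\mu_{h_k}$-stable, and Theorem \ref{prop:sameH1} produces, for each $E$, an ample class $H_E$ close to $h_k$ with $\Phi(E)$ being $\mu_{H_E}$-stable; for $H_E$ near enough to $h_k$ we then have $H_E\in C_0$. Because slope-stability of a fixed sheaf is constant throughout the interior of a chamber, $\Phi(E)$ is $\mu_H$-stable for every ample $H\in C_0$. As $C_0$ is the same for all $[E]$, any fixed $H\in C_0$ proves the theorem.

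The hard part will be the claim that the chamber $C_0$ into which the $H_E$ fall is genuinely independent of $E$, rather than merely existing for each $E$ separately. This is where the constancy of $\gamma$ is essential, since it forces the entire wall set, and not just the abstract existence of a good chamber, to be identical for all $[E]$; one then needs that $h_k$, lying on a boundary ray, has a unique adjacent chamber, which in turn uses that no potential destabiliser of type $\gamma$ attains equal $h_k$-slope, precisely the content of the $\mu_{h_k}$-stability in Proposition \ref{prop:stableK}. Alternatively, one can circumvent the chamber language altogether by tracing through \cite[Proposition 4.8]{stapleton} and checking that the size of the neighbourhood of $h_k$ it produces depends only on the invariants $\gamma$, which are constant across $M_{X,h}(v)$.
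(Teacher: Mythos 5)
There is a genuine gap, and it sits exactly where you flag ``the hard part'': your claim that the wall-and-chamber structure for slope stability near $h_k$ ``depends only on $\gamma$'' is not true as stated, and no constancy of numerical invariants can supply it. For slope stability the relevant walls are cut out by the classes $c_1(F)$ of \emph{actual} subsheaves $F \subseteq \Phi(E)$ of slope close to $\mu(\Phi(E))$, and the collection of such subsheaf classes is not a numerical invariant of $\Phi(E)$: two sheaves with the same Mukai vector can have entirely different sets of destabilizing subsheaves. If one instead takes walls defined by \emph{all} numerical classes of potential destabilizers, that set is infinite and the walls can accumulate at $h_k$, so there is no ``unique adjacent chamber $C_0$'' to speak of. What is really needed is a finiteness statement that is \emph{uniform over the whole family}: the set $S$ of classes $c_1(F)$, for $F \subseteq \Phi(E)$ with $\mu$-slope bounded below, as $[E]$ ranges over all of $M_{X,h}(v)$, must be finite. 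This is precisely the point the paper addresses, and it is not formal: the paper observes via Corollary \ref{eulch} and the exact sequence \eqref{eqn:sheafK} that \emph{every} $\Phi(E)$ is a subsheaf of the \emph{same} trivial bundle $\OO_{\Xk}^{r+s}$, so $S$ is contained in the corresponding set $S'$ for $\OO_{\Xk}^{r+s}$, which is finite by \cite[Theorem 2.29]{greb16}; with this uniform finiteness in hand, the argument of \cite[Theorem 2.8]{rz2} goes through verbatim. Your proposal contains no substitute for this step, and your fallback suggestion (tracing through \cite[Proposition 4.8]{stapleton} and checking that the neighbourhood depends only on $\gamma$) fails for the same reason: the size of that neighbourhood is governed by the finitely many subsheaf classes of the given sheaf, not by its numerical type alone, so a priori it could shrink to nothing as $[E]$ moves in the positive-dimensional moduli space.

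A secondary but real error: you attribute the needed boundedness to \cite[Theorem 2.3.1]{dCM}. That reference is the hard Lefschetz theorem for semismall maps; in the proof of Theorem \ref{prop:sameH1} it is used only to verify the hypotheses of \cite[Proposition 4.8]{stapleton} concerning the semi-ample class $h_k$ (the Hilbert--Chow morphism being semismall), not to bound walls or destabilizers. The parts of your argument that are correct --- constancy of the numerical invariants of $\Phi(E)$ (the paper uses Lemma \ref{lem:Chernclass} for exactly this, to fix the slope bound $c$), and the idea of finding one chamber adjacent to $h_k$ that works for all $E$ --- are indeed the skeleton of the paper's proof, but the load-bearing uniform finiteness must come from the common embedding $\Phi(E) \subseteq \OO_{\Xk}^{r+s}$ together with \cite{greb16}, or from some comparable boundedness argument for the family $\mathcal{F}$, neither of which appears in your proposal.
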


\begin{proof}
The proof is same as for \cite[Theorem 2.8]{rz2}. We just have to replace the sheaf $E_x$ by $\Phi(E)$ and the surface $X$ by the moduli space $\M_{X,h}(v)$. Then we note that the value of $$c = \mu_\beta(\Phi(E))$$ is independent of the choice of $[E] \in \M_{X,h}(v)$ by Lemma \ref{lem:Chernclass}. 

The finiteness of the set
$$ S := \{ c_1(F) \mid F \subseteq \Phi(E) \text{ for some } [E] \in \M_{X,h}(v) \text{ such that } \mu_\beta(F) \geqslant c \}. $$
can be seen as follows: using Corollary \ref{eulch} and exact sequence \eqref{eqn:sheafK} we see that every $\Phi(E)$ is a subsheaf of $\OO_{\Xk}^{r+s}$. Hence $S$ is a subset of 
$$ S':=  \{ c_1(F) \mid F \subseteq \OO_{\Xk}^{r+s} \text{ such that } \mu_\beta(F) \geqslant c \}. $$
But $S'$ is finite due to \cite[Theorem 2.29]{greb16}, so $S$ is also finite. The rest of the proof works unaltered.
\end{proof}

From now on we fix a Mukai vector $v$ that satisfies conditions $(1)-(3)$ and an ample class $H\in \NS(\Xk)$ that satisfies Theorem \ref{sameH2}. We denote by $\M_{\Xk,H}(\Phi^C(v))$ the moduli space of $\mu_H$-stable sheaves on $\Xk$ with Mukai vector $\Phi^C(v)$. Here
\begin{equation*}
\Phi^C: \mathrm{H}^{*}(X,\mathbb{Q}) \rightarrow \mathrm{H}^{*}(\Xk,\mathbb{Q})
\end{equation*}
is the induced cohomological Fourier-Mukai transform, see \cite[5.28, 5.29]{huy3}.

In the following we want to give an explicit construction of the morphism
\begin{equation*}
	\M_{X,h}(v) \rightarrow \M_{\Xk,H}(\Phi^C(v)),\,\,\,[E]\mapsto \left[ \Phi(E)\right].
\end{equation*}

We start by constructing a classifying family for the $\Phi(E)$.

\begin{lem}
There is a family $\mathcal{F}$ of sheaves on $\Xk\times \M_{X,h}(v)$, flat over $\M_{X,h}(v)$, such that for every $[E]\in \M_{X,h}(v)$ the restriction to the fiber over $[E]$ is given by
\begin{equation*}
\mathcal{F}_{[E]}= \Phi(E).
\end{equation*}
\end{lem}

\begin{proof}
As $\M_{X,h}(v)$ is fine, there is a universal family $\mathcal{U}$ on $X\times \M_{X,h}(v)$, flat over $\M_{X,h}(v)$. Lemma \ref{def:KE} shows that the complex $\Phi(\mathcal{U}_{[E]})=\Phi(E)$ is in fact a sheaf on $\Xk$ for every $[E]\in \M_{X,h}(v)$. The existence of the family $\mathcal{F}$ now follows from \cite[Proposition 4.2]{bridge}. 
\end{proof}

The family $\mathcal{F}$ induces a classifying morphism
	\begin{equation*}\label{eqn:class}
		f : \M_{X,h}(v) \longrightarrow \M_{\Xk,H}(\Phi^C(v)), \quad [E]\longmapsto [\Phi(E)].
	\end{equation*}
	
	The morphism $f$ has the following property:
	
	\begin{thm}\label{thm:component1}
		The family $\mathcal{F}$ identifies $\M_{X,h}(v)$ with a smooth connected component of $\M_{\Xk,H}(\Phi^C(v))$.
	\end{thm}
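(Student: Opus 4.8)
The plan is to read off the tangent spaces and the local structure of the target at the points $[\Phi(E)]$ directly from the $\Ext$-compatibility of the $\PP^{k-1}$-functor (Remark \ref{pn-func2}), and then combine this infinitesimal information with the projectivity and connectedness of $M_{X,h}(v)$. Throughout I write $d=\dim M_{X,h}(v)=\langle v,v\rangle+2=\ext^1_X(E,E)$, using that $M_{X,h}(v)$ is a smooth, projective, connected (indeed irreducible, as $v=(r,h,s)$ is primitive and $h$ is generic) holomorphic symplectic manifold, with tangent space $\Ext^1_X(E,E)$ at $[E]$.

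First I would prove that $f$ is injective on closed points. If $\Phi(E)\cong\Phi(F)$, then $\Hom_{\Xk}(\Phi(E),\Phi(F))\neq 0$; but the degree-$0$ part of Remark \ref{pn-func2} gives $\Hom_{\Xk}(\Phi(E),\Phi(F))\cong \Hom_X(E,F)\otimes H^0(\PP^{k-1},\CC)\cong\Hom_X(E,F)$, so $\Hom_X(E,F)\neq 0$. Since two non-isomorphic stable sheaves with the same Mukai vector admit no nonzero morphism, $E\cong F$, i.e.\ $[E]=[F]$. Next I would compute the differential. The tangent space to $M_{\Xk,H}(\Phi^C(v))$ at $[\Phi(E)]$ is $\Ext^1_{\Xk}(\Phi(E),\Phi(E))$, and reading off the degree-$1$ part of Remark \ref{pn-func2} only the summand $i=1,j=0$ survives, so $\Ext^1_{\Xk}(\Phi(E),\Phi(E))\cong\Ext^1_X(E,E)$, the identification being realized by the functor-induced map, which is exactly $df_{[E]}$. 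Hence $df_{[E]}$ is an isomorphism for every $[E]$, and in particular $f$ is unramified.

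The main subtlety lies in establishing smoothness of the target along the image. One cannot simply transport the unobstructedness of the K3 moduli space, because the obstruction space $\Ext^2_{\Xk}(\Phi(E),\Phi(E))$ is strictly larger than $\Ext^2_X(E,E)$: the degree-$2$ part of Remark \ref{pn-func2} carries an extra summand $\Hom_X(E,E)\otimes H^2(\PP^{k-1},\CC)$ for $k\geq 2$. Instead I would argue by a dimension sandwich. By injectivity on points together with $df$ being injective everywhere, the image $f(M_{X,h}(v))$ is a $d$-dimensional subvariety of $M_{\Xk,H}(\Phi^C(v))$ passing through $[\Phi(E)]$, whence
\begin{equation*}
 d \;\leq\; \dim_{[\Phi(E)]} M_{\Xk,H}(\Phi^C(v)) \;\leq\; \dim_{\CC}\Ext^1_{\Xk}(\Phi(E),\Phi(E)) \;=\; d,
\end{equation*}
where the right inequality is the usual bound by the Zariski tangent space. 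Thus the target is smooth of dimension $d$ at every point of the image, and since $df$ is an isomorphism of equidimensional smooth points, $f$ is étale there.

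Finally I would assemble the conclusion. An étale morphism that is injective on points is an open immersion, so $f$ is an isomorphism of $M_{X,h}(v)$ onto an open subset of $M_{\Xk,H}(\Phi^C(v))$. On the other hand, $M_{X,h}(v)$ is projective (being a fine moduli space of stable sheaves), so $f$ is proper and its image is closed as well. An open and closed subset that is the image of the connected variety $M_{X,h}(v)$ is a single connected component, smooth of dimension $d$, and $f$ identifies $M_{X,h}(v)$ with it. The step I expect to be the genuine obstacle is the smoothness of the target, precisely because the obstruction space grows under $\Phi$; the dimension-sandwich argument is what circumvents a direct obstruction computation, and verifying that the degree-$1$ identification of Remark \ref{pn-func2} really coincides with the differential $df$ (rather than merely being an abstract isomorphism of graded vector spaces) is the compatibility one must check with some care.
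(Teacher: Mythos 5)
Your two core computations are exactly the paper's: injectivity on closed points from the degree-zero part of Remark \ref{pn-func2} together with stability of $E$ and $F$, and $\Ext^1_{\Xk}(\Phi(E),\Phi(E))\cong\Ext^1_X(E,E)$ from the degree-one part (the odd cohomology of $\PP^{k-1}$ vanishes). Where you diverge is in the assembly: at this point the paper simply invokes \cite[Lemma 1.6]{rz}, which states precisely that a morphism from a smooth projective connected variety that is injective on closed points and matches Zariski tangent space dimensions identifies the source with a smooth connected component of the target; you instead reprove this lemma by hand (dimension sandwich giving smoothness of the target along the image, then \'etale plus injective gives an open immersion, then open plus closed plus connected gives a component). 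That is a legitimate, more self-contained route, and your sandwich correctly isolates why the enlarged obstruction space $\Ext^2_{\Xk}(\Phi(E),\Phi(E))\cong\Ext^2_X(E,E)\oplus\bigl(\Hom_X(E,E)\otimes H^2(\PP^{k-1},\CC)\bigr)\oplus\cdots$ is harmless.

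The one genuine soft spot is the step you yourself flag: your \'etale argument leans on the claim that $df_{[E]}$ \emph{is} the functor-induced isomorphism $\Ext^1_X(E,E)\to\Ext^1_{\Xk}(\Phi(E),\Phi(E))$, which amounts to the (true, but here unproven) statement that the Kodaira--Spencer map of the relative family $\mathcal{F}$ is the $\Phi$-image of that of $\mathcal{E}$. The point worth internalizing is that this compatibility is never needed. Your dimension sandwich already uses only set-theoretic injectivity: the image is a closed irreducible subvariety of dimension $d$, so the local dimension of the target at $[\Phi(E)]$ is at least $d$, while the tangent-space bound gives at most $d$; hence the target is smooth of dimension $d$ along the image, and the image, being a $d$-dimensional closed subvariety of a variety that is locally irreducible of dimension $d$ at each of its points, is open and closed. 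Then $f$ is a bijective morphism from a projective variety onto a smooth (in particular normal) variety in characteristic zero, hence an isomorphism onto that component by Zariski's main theorem, with no reference to $df$ at all. This dimension-only argument is exactly what \cite[Lemma 1.6]{rz} packages, and it is why the paper can get away with comparing only dimensions of tangent spaces rather than identifying the differential.
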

	
	\begin{proof}
	Using the family $\mathcal{F}$ as the kernel of an integral functor, we get an induced morphism
		\begin{equation*}
		\Ext^1_{\M_{X,h}(v)}(\mathcal{O}_{[E]},\mathcal{O}_{[E]}) \rightarrow \Ext^1_{\Xk}(\mathcal{F}_{[E]},\mathcal{F}_{[E]})
		\end{equation*}
		which is the Kodaira-Spencer map of the family $\mathcal{F}$ at the point $[E]\in \M_{X,h}(v)$ by \cite[Proposition 4.4]{bridge}.
		Using the canonical identifications
		\begin{equation*}
		\Ext^1_{\M_{X,h}(v)}(\mathcal{O}_{[E]},\mathcal{O}_{[E]})\cong T_{[E]}\M_{X,h}(v)\cong \Ext_X^1(E,E),
		\end{equation*}
		one can rewrite the induced morphism as
		\begin{equation*}
		\Ext^1_X(E,E)\rightarrow \Ext^1_{\Xk}(\Phi(E),\Phi(E))
		\end{equation*}
		which is an isomorphism by Remark \ref{extiso}.
		
This implies that the classifying map $f : \M_{X,h}(v) \longrightarrow \M_{\Xk,H}(\Phi^C(v))$ is \'{e}tale and surjective onto a smooth connected component. But using again Remark \ref{extiso} shows that $f$ has to be of degree one since for $[E]\neq [F]\in \M_{X,h}(v)$ we have
\begin{equation*}
\Hom_{\Xk}(\Phi(E),\Phi(F))\cong \Hom_X(E,F)=0.
\end{equation*}
So $f$ must be an isomorphism to a smooth connected component.
	\end{proof}
	
	\begin{rem}
	This argument uses essentially the same arguments as the proof in \cite[Section 1]{lange} where a component in a moduli space of stables bundles on a moduli space of bundles on a curve is constructed.
	\end{rem}		

Theorem \ref{sameH2} and Theorem \ref{thm:component1} are not void, that is there are Mukai vectors on certain K3 surfaces that satisfy conditions $(1)-(3)$. Here we give two examples:

\begin{exam}\phantomsection\label{exmp}
\begin{enumerate}[a)]
\item Let $X$ be a K3 surface with $\NS(X)=\mathbb{Z}h$ such that $h^2=50$. For $k=2$ the Mukai vector $v=(3,h,8)$ satisfies the conditions $(1)-(3)$: we have $\chi=11$. On the other hand $v^2=h^2-48=2$ and $(r+1)k=8$ so that  $\chi=11 \geqslant 10=\frac{v^2}{2}+(r+1)k+1$. A further computation shows $v^2+2 = 4 < 6 =2r$ so that all sheaves in $\M_{X,h}(v)$ are locally free by \cite[Lemma 4.4.2]{wray}. As $\mathrm{gcd}(3,8)=1$ we see that $\M_{X,h}(v)$ is a fine moduli space by \cite[Corollary 4.6.7]{huy}. In this case we have $\dim(\M_{X,h}(v))=4$ which gives a 4-dimensional component in $\M_{X^{[2]},H}(\Phi^C(v))$.
\item Let $X$ be a K3 surface with $\NS(X)=\mathbb{Z}h$ such that $h^2=186$. Then a similar computation shows that for $k=3$ then the Mukai vector $v=(5,h,18)$ satisfies the conditions $(1)-(3)$. In this example $\dim(\M_{X,h}(v))=8$ which gives an 8-dimensional component in $\M_{X^{[3]},H}(\Phi^C(v))$.
\end{enumerate}
\end{exam}

\end{document}